\newtheorem{theorem}{Theorem}[section]
\newtheorem{prop}[theorem]{Proposition}
\newtheorem{defn}[theorem]{Definition}
\newtheorem{lemma}[theorem]{Lemma}
\newtheorem{coro}[theorem]{Corollary}
\newtheorem{prop-def}{Proposition-Definition}[section]
\newtheorem{remark}[theorem]{Remark}
\newtheorem{exam}[theorem]{Example}
\newcommand{\C}{\mathbb{C}} 
\newcommand{\Z}{\mathbb{Z}} 
\newcommand{\F}{\mathbb{F}} 
\newcommand{\g}{\mathfrak{g}} 
\newcommand{\h}{\mathfrak{h}}
\newcommand{\gl}{\mathfrak{gl}} 
\newcommand{\sll}{\mathfrak{sl}}
\newcommand{\ext}{{\rm Ext}_{\omega}^{1}}
\newcommand{\TDer}{{\rm TDer}}
\newcommand{\Der}{{\rm Der}}
\newcommand{\ad}{{\rm ad}}
\newcommand{\Tr}{{\rm Tr}}
\newcommand{\Hom}{{\rm Hom}}
\newcommand{\GL}{{\rm GL}}
\newcommand{\A}{\mathcal{A}} 
\newcommand{\B}{\mathcal{B}} 
\newcommand{\D}{\mathcal{D}} 
\newcommand{\HH}{\mathcal{H}} 
\newcommand{\LL}{\mathcal{L}} 
\newcommand{\R}{\mathcal{R}}
\newcommand{\lam}{\lambda}
\begin{document}

\title{Representations of $\omega$-Lie Algebras and Tailed Derivations}

\author{Runxuan Zhang}
\address{School of Mathematics and Statistics, Northeast Normal University, Changchun 130024, P.R. China}
%\address{Department of Mathematics, Nantong University, Nantong 226019, P.R. China}
\email{zhangrx728@nenu.edu.cn}

\subjclass[2010]{17B10; 17B30; 17B40}

\keywords{$\omega$-Lie algebra; irreducible module; indecomposable module;  tailed derivation.}

\maketitle

\baselineskip=18pt

\begin{abstract}
We study the representation theory of finite-dimensional  $\omega$-Lie algebras over the complex field. We derive an $\omega$-Lie version of the classical Lie's theorem, i.e., any finite-dimensional irreducible module of a soluble $\omega$-Lie algebra is one-dimensional. We also prove that indecomposable modules of some three-dimensional $\omega$-Lie algebras could be parametrized by the complex field and nilpotent matrices. We introduce the notion of a tailed derivation of a nonassociative  algebra $\g$ and
prove that if $\g$ is a Lie algebra, then there exists a one-to-one correspondence between tailed derivations of $\g$ and
one-dimensional $\omega$-extensions of $\g$. 
\end{abstract}

\section{Introduction}	

In 2007, Nurowski introduced the notion of $\omega$-Lie algebras for which the original motivation stems from some geometry considerations, see \cite{Nur07}, \cite{BN07} and  \cite{Nur08}. More specifically,
a vector space $L$ over a field $\F$  equipped with a skew-symmetric bracket $[-,-]: L\times L \longrightarrow L$  and a bilinear form $\omega: L\times L \longrightarrow \F$ is called an \textbf{$\omega$-Lie algebra} provided that
\begin{equation}
[[x,y],z]+ [[y,z],x]+  [[z,x],y]=\omega(x,y)z+\omega(y,z)x+\omega(z,x)y \tag{$\omega$-Jacobi identity}
\end{equation}
for all $x,y,z\in L$. Clearly, $\omega$-Lie algebras with $\omega=0$ are nothing but ordinary Lie algebras, which means that the notion of $\omega$-Lie algebras extends that of Lie algebras.

The present article is devoted to a study of the representation theory of  finite-dimensional  $\omega$-algebras over the complex field.
Let's recall some development on this subject.
 In 2010, Zusmanovich in \cite[Section 9, Theorem 1]{Zus10} proved an important result on the structure of  $\omega$-Lie algebras, which says that all finite-dimensional non-Lie  $\omega$-Lie algebras are either low-dimensional or have a quite degenerate structure.  By the $\omega$-Jacobi identity one sees that there are no non-Lie $\omega$-Lie algebras of dimensions one and two.
In our previous works \cite{CLZ14} and \cite{CZ17}, we  derived a rough classification of three- and four-dimensional complex $\omega$-Lie algebras. With the classification, we recently calculated the automorphism groups and the derivation algebras of low-dimensional  $\omega$-Lie algebras  over the complex field, reformulated elementary facts about the representation theory of  $\omega$-Lie algebras, and we also proved that all finite-dimensional irreducible representations of the family $C_{\alpha}$ of $\omega$-Lie algebras are one-dimensional; see \cite[Sections 6 and 7]{CZZZ18}.

%\subsection*{$\omega$-Lie version of Lie's theorem}
The first purpose of this article is to generalize the classical Lie's theorem of complex soluble Lie algebras to the case of $\omega$-Lie algebras. We introduce the following notion of  degree of  $\omega$-Lie algebras. %Recall that a subspace $J$ of an $\omega$-Lie algebra $L$ is called an \textbf{ideal} if $[J,L]\subseteq J$; and we say that $L$ is \textbf{simple} if it is nonabelian and contains no nonzero proper ideas. 

\begin{defn}{\rm
Suppose that $L$ is a finite-dimensional $\omega$-Lie algebra. The positive integer
$\deg(L):=\min\{\dim(L)-\dim(I)\mid I\subset L\textrm{ is a proper ideal}\}$ is called the \textbf{degree} of  $L$.
}\end{defn}

We will show that soluble $\omega$-Lie algebras are  of degree 1; see Proposition \ref{prop2.2} below.
Our first main result can be formulated as follows.

\begin{theorem}\label{mainthm2}
Let $L$ be a non-simple complex $\omega$-Lie algebra of degree 1 with a soluble ideal $\g$ of maximal dimension $\dim(L)-1$ and  $V$ be  a finite-dimensional irreducible $L$-module. Then $\dim(V)=1$.
\end{theorem}

 Proposition \ref{prop2.2} and Theorem \ref{mainthm2} combine to a direct consequence which could be regarded as an $\omega$-Lie version of the classical Lie's theorem.

\begin{coro}[The $\omega$-Lie version of Lie's theorem]\label{coro1.3}
Let $L$ be a   finite-dimensional   soluble $\omega$-Lie algebra  over the complex field and  $V$ be  a finite-dimensional irreducible $L$-module. Then $\dim(V)=1$.
\end{coro}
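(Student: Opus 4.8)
The plan is to obtain the corollary as a direct combination of Proposition \ref{prop2.2} and Theorem \ref{mainthm2}, treating the degenerate low-dimensional cases by hand. First I would dispose of the base case $\dim(L)\leq 1$. If $L=0$ there is nothing to prove, and if $\dim(L)=1$, say $L=\C x$, then the single linear operator $x_{V}$ on the finite-dimensional complex space $V$ admits an eigenvector; the line it spans is an $L$-submodule, so irreducibility of $V$ forces $\dim(V)=1$. Here one only needs to check that a one-dimensional $x_{V}$-invariant subspace is genuinely a submodule in the sense appropriate to $\ome$-Lie algebras, which is immediate from the module axioms.

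For $\dim(L)\geq 2$ I would argue as follows. Since $L$ is soluble, Proposition \ref{prop2.2} applies and shows that $L$ is normal of degree $1$. By the definition of degree this yields a nonzero proper ideal $I\subset L$ with $\dim(L)-\dim(I)=1$; that is, $I$ is an ideal of codimension one in $L$. Because $I$ is a subalgebra of the soluble $\ome$-Lie algebra $L$, it is itself soluble: the derived series of $I$ is contained termwise in that of $L$, so $L^{(n)}=0$ forces $I^{(n)}=0$, and this reasoning uses only that the bracket restricts to $I$, hence carries over verbatim to the $\ome$-setting.

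Setting $\g:=I$, I then have a soluble ideal of maximal dimension $\dim(L)-1$ sitting inside the normal $\ome$-Lie algebra $L$ of degree $1$. These are precisely the hypotheses of Theorem \ref{mainthm2}, so applying that theorem to the irreducible $L$-module $V$ gives $\dim(V)=1$, as required.

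The argument is essentially formal once Proposition \ref{prop2.2} and Theorem \ref{mainthm2} are available, so I do not anticipate a serious difficulty. The only two points needing care are the verification that the codimension-one ideal furnished by Proposition \ref{prop2.2} is indeed soluble, so that Theorem \ref{mainthm2} genuinely applies, and the separate treatment of the case $\dim(L)=1$, in which $L$ fails to be normal and one must instead produce the required one-dimensional submodule directly from an eigenvector of $x_{V}$.
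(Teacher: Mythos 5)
Your proposal is correct and follows essentially the same route as the paper, which obtains the corollary in one line by combining Proposition \ref{prop2.2} and Theorem \ref{mainthm2} via the observation that any ideal of a soluble $\ome$-Lie algebra is soluble. Your separate treatment of $\dim(L)\leq 1$ (where $L$ has no nonzero proper ideal and hence is not normal) is a careful refinement the paper glosses over, but it does not alter the substance of the argument.
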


We also give some applications of Theorem \ref{mainthm2} and fundamental properties of $\omega$-Lie algebra modules in Section \ref{sec2}.

%\subsection*{Indecomposable representations}
The second goal of this paper is to study indecomposable representations of some three-dimensional non-Lie $\omega$-Lie algebras.
Note that we have already classified these $\omega$-Lie algebras in \cite[Theorem 2]{CLZ14} into $\LL:=\{L_{1},L_{2}, A_{\alpha},B,C_{\alpha}\}$, see Section \ref{sec2} for details.
Let $L\in\{L_{1},A_{\alpha}\}$  and $\R_{n}(\C)$ be the set of all indecomposable $L$-modules on $\C^{n}$. Section \ref{sec3} is devoted to a proof of the following second main result.

\begin{theorem}\label{mainthm3}
The equivalence classes in $\R_{n}(\C)$ could be parametrized by
the complex field $\C$, the conjugacy classes of $n\times n$ nilpotent matrices and an affine variety. 
\end{theorem}

%\subsection*{Tailed derivations and $\omega$-extensions}
Our third purpose is to study one-dimensional $\omega$-extensions of Lie algebras. Note that  one-dimensional extensions of a Lie algebra $\g$ can be parameterized by the set of all twisted derivations of $\g$; see  \cite[Proposition 5.4]{AM14}.
Let $\g$ be a Lie algebra and $L=\g\oplus \C x$ be the vector space of dimension $\dim(\g)+1$.
Then $L$ is called a one-dimensional $\omega$-\textbf{extension} of $\g$ through $\C x$ if there exists an $\omega$-Lie algebra structure on $L$ containing $\g$ as an ideal  and $\omega(\g,\g)=0$. To describe the set $\ext(\g)$  of all one-dimensional $\omega$-extensions of $\g$,
we introduce the notion of tailed derivations of  nonassociative algebras.

\begin{defn}{\rm
Let $A$ be a nonassociative algebra. A linear map $D:A\longrightarrow A$  is called a \textbf{tailed derivation} of $A$ if there exists a linear form $d: A\longrightarrow \F~ (y\mapsto d_{y})$ such that
\begin{equation}
D([y,z])=[D(y),z]+[y,D(z)]+d_{z}y-d_{y}z \label{ }
\end{equation}
for all $y,z\in A$.
}\end{defn}

We observe  that for a tailed derivation $D$, such linear form $d$ is unique; and moreover,  in \cite[Section 6, Definition]{Zus10}, tailed derivations of an anti-commutative algebra  have appeared as
a special kind of $(\alpha,\lambda)$-derivations with $\lambda=0$.
Clearly, all derivations of $A$ are tailed derivations with trivial tails, i.e., $d_{y}=d_{z}=0$ for all $y,z\in A$.
We denote by $\TDer(A)$ the set of all tailed derivations of $A$. We will show that $\TDer(A)$ is a Lie subalgebra of the general linear Lie algebra $\gl(A)$; see Proposition \ref{prop4.1}. Thus $\Der(A)\subseteq \TDer(A)\subseteq\gl(A)$ as Lie subalgebras, with the containment might be strict; see Example \ref{exam4.2}.
Now the third main result can be stated as follows.

\begin{theorem} \label{mainthm4}
Let $\g$ be a Lie algebra and $\omega$ be  a skew-symmetric bilinear form on $\g\oplus \C x$. Then
there exists a one-to-one correspondence between $\ext(\g)$ and $\TDer(\g)$.
\end{theorem}

We also provide an example that demonstrate that $\omega$-Lie algebras could be constructed 
by Lie algebras and their tailed derivations; see Example \ref{exam4.6}.\\

\noindent{\bf Conventions.} The Lie algebra notions that do not involve the form $\omega$ in their definitions are extended verbatim to $\omega$-Lie algebras: for example,  subalgebras, ideals, simple, soluble and abelian algebras.

Throughout this article  we assume that the ground field is the complex field $\C$. All representations (modules), vector spaces and  algebras are finite-dimensional over $\C$. We use $z_{V}$ to denote the linear transformation of an abstract element $z$ acting on a vector space $V$. We use $\Z^+$ and $\Z_{\geqslant 0}$ to denote the sets of  positive and  non-negative integers, respectively.

\section{The $\omega$-Lie version of Lie's theorem}\label{sec2}

In this section, we  show Theorem \ref{mainthm2} and provide some applications. To begin with, we present two examples of non-simple  $\omega$-Lie algebras.

\begin{exam}\label{exam2.1}
{\rm The following three-dimensional $\omega$-Lie algebras are   of degree 1:
\begin{enumerate}
  \item $L_{1}:[x,z]=0,[y,z]=z, [x,y]=y\textrm{ and }\omega(y,z)=\omega(x,z)=0,\omega(x,y)=1\,;$
  \item $L_{2}:[x,y]=0,[x,z]=y,[y,z]=z \textrm{ and } \omega(x,y)=0, \omega(x,z)=1, \omega(y,z)=0\,.$
\end{enumerate}
Here $\{x,y,z\}$ denotes a basis of the underlying vector space. We observe that the subspace spanned by $y$ and $z$ is a proper ideal, so
$L_{1}$ and $L_{2}$ are non-simple and  of degree 1.}\end{exam}

Note that $L_{1}$ and $L_{2}$ in Example \ref{exam2.1}  are both soluble $\omega$-Lie algebras. Moreover, we have the following more general result.

\begin{prop}\label{prop2.2}
Soluble $\omega$-Lie algebras are  of degree 1.
\end{prop}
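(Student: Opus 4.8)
The plan is to produce, for any soluble $\ome$-Lie algebra $L$ of dimension $\geq 2$ (the one-dimensional case carries no nonzero proper ideal, so normality is vacuous there and we tacitly exclude it), an explicit ideal of codimension one. Such an ideal simultaneously witnesses normality and forces the degree to equal $1$, since \emph{every} proper ideal $I$ satisfies $\dim(L)-\dim(I)\geq 1$, so the minimum in the definition of $\deg(L)$ cannot drop below $1$.

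First I would record that the derived series $L^{(0)}=L$, $L^{(k+1)}=[L^{(k)},L^{(k)}]$ remains a well-defined descending chain of subalgebras in the $\ome$-setting. Indeed, for $a,b,c,d\in L$ the element $[[a,b],[c,d]]$ is a bracket of the two elements $[a,b],[c,d]\in L$, hence lies in $[L,L]$; by induction each $L^{(k)}$ is closed under the bracket and $L^{(k+1)}\subseteq L^{(k)}$. I stress that this step does \emph{not} require $[L,L]$ to be an ideal, and in general it need not be, precisely because of the scalar terms $\ome(x,y)z+\ome(y,z)x+\ome(z,x)y$ on the right-hand side of the $\ome$-Jacobi identity. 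This is the one place where the $\ome$-Lie case genuinely diverges from the classical one, and I would identify it as the main subtlety to be handled with care.

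Next I would deduce $[L,L]\subsetneq L$ from solubility. Were $[L,L]=L$, then $L^{(1)}=L$ and inductively $L^{(k)}=L$ for every $k$, so the derived series would never reach $0$, contradicting $L^{(n)}=0$ (here $L\neq 0$). Hence $D:=[L,L]$ is a proper subspace, and since $\dim(D)\leq\dim(L)-1$ I may choose a hyperplane $I$ with $D\subseteq I\subsetneq L$.

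Finally, I would verify that $I$ is the desired ideal. The key computation is entirely formal and survives verbatim in the $\ome$-Lie setting: since $I\subseteq L$ we have $[L,I]\subseteq[L,L]=D\subseteq I$, so $I$ is an ideal. It is proper by construction, and nonzero because $\dim(I)=\dim(L)-1\geq 1$. Thus $L$ is normal, and as $I$ realizes $\dim(L)-\dim(I)=1$ while no proper ideal can do better, $\deg(L)=1$. The whole argument is short once the derived-series step is secured; the only thing demanding attention is that the chain $L\supseteq L^{(1)}\supseteq L^{(2)}\supseteq\cdots$ is legitimate even though $[L,L]$ may fail to be an ideal, after which both the properness of $D$ and the ideal property of $I$ follow immediately.
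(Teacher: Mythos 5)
Your proof is correct and follows essentially the same route as the paper's: deduce $[L,L]\neq L$ from solubility, enlarge $[L,L]$ to a hyperplane $I$, and observe that $[L,I]\subseteq[L,L]\subseteq I$ makes $I$ a nonzero proper ideal of codimension one. The additional care you take in justifying the derived series and excluding the one-dimensional case only makes explicit what the paper leaves implicit.
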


\begin{proof}
Let $L$ be an $n$-dimensional  soluble $\omega$-Lie algebra. Then $[L,L]\neq L$ and so it is not simple. To show that $L$ has degree 1, we may find an $(n-1)$-dimensional subspace $I$ of $L$ such that $[L,L]\subseteq I\subset L$. As $[I,L]\subseteq [L,L]\subseteq I$, we see that $I$ is an ideal of $L$. Clearly, $I$ is a proper ideal with the maximal dimension $n-1$. Hence, $L$ has degree 1.
\end{proof}

%We see that if $L$ is a non-simple $\omega$-Lie algebra, then $\dim(L)\geqslant 2$.   

We also present some examples of three-dimensional simple $\omega$-Lie algebras.

\begin{exam}{\rm
Let $\{x,y,z\}$ be a basis of $\C^{3}$. The following $\omega$-Lie algebras are simple:
\begin{enumerate}
  \item
  $A_{\alpha}: [y,z]=z,[x,z]=y-z, [x,y]=x+\alpha z,
\omega(y,z)=\omega(x,z)=0,
\omega(x,y)=-1\,;$
  \item  $B:  [y,z]=z, [x,y]=z-x, [x,z]=y, \omega(y,z)=\omega(x,y)=0, \omega(x,z)=2\,;$
     \item $C_{\alpha}:  [y,z]=z, [y,x]=\alpha x, [z,x]=y, \omega(y,z)=\omega(x,y)=0,
\omega(z,x)=1+\alpha\,,$
\end{enumerate}
 where  $\alpha\in \C$. See \cite[Proposition 7.1]{CZ17} for the details. Comparing with \cite[Theorem 2]{CLZ14} or \cite[Theorem 1.4]{CZ17},
we see that in this example the generating relations actually have been reformulated by choosing a suitable basis. 
 }\end{exam}

\begin{remark}{\rm
In fact, \cite[Theorem 2]{CLZ14} indicates that
every three-dimensional  non-Lie $\omega$-Lie algebra over $\C$ must be isomorphic to one of $\LL=\{L_{1},L_{2}, A_{\alpha},B,C_{\alpha}\}$.
}\end{remark}

Here we  provide an example of four-dimensional non-simple $\omega$-Lie algebra of degree $>1$.

\begin{exam}{\rm  Let $\{x,y,z,e\}$ be a basis of $\C^{4}$. In the following $\omega$-Lie algebra
\begin{eqnarray*}
\widetilde{B} :&& [x,y]=y,[x,z]=y+z, [y,z]=x,  [e,x]=-2e, [e,y]=0,[e,z]=0\,, \\
&\textrm{and } &\omega(x,y)=\omega(x,z)=0,\omega(y,z)=2, \omega(e,x)=\omega(e,y)=\omega(e,z)=0\,,
\end{eqnarray*}
the subspace spanned by $\{e\}$ is a proper ideal of $\widetilde{B}$ with the maximal dimension 1, i.e., there are no proper ideas in
$\widetilde{B}$ with dimension $>1$. Hence $\widetilde{B}$ is a non-simple $\omega$-Lie algebra of degree 3. 
}\end{exam}

Let $L$ be an $\omega$-Lie algebra  and $V$ be a finite-dimensional vector space.
Recall that $V$ is called an \textbf{$L$-module} if there exists a bilinear map $L\times V\longrightarrow V, (x,v)\mapsto x\cdot v$ such that 
\begin{equation}
[x,y] \cdot v= x\cdot(y \cdot v)-y\cdot(x\cdot v)+\omega(x,y)v \label{eq2.1}
\end{equation}
for all $x,y\in L$ and $v\in V.$

To derive an $\omega$-Lie version of the classical Lie's theorem, we concentrate on the class of non-simple $\omega$-Lie algebras of degree 1, and we  give a proof of Theorem \ref{mainthm2}.

\begin{proof}[Proof of Theorem \ref{mainthm2}]
If $\dim(L)\leqslant2$, then $L$ is a soluble Lie algebra. It follows from the classical Lie's theorem that $\dim(V)=1$.
Thus we may suppose $\dim(L)\geqslant 3$ and regard $V$ as a $\g$-module.  By  \cite[Corollary 3.2]{Zus10}, we see that a proper soluble ideal 
$\g$ of $L$  is a soluble Lie algebra. If $V$ is an irreducible $\g$-module, then classical Lie's theorem implies $\dim(V)=1$, and we are done.

Now we assume that $V$ is a reducible $\g$-module and there exists an irreducible $\g$-submodule $W\subset V$.
Applying the classical Lie's theorem again we see that $\dim(W)=1$. Fix a nonzero vector $w_{0}\in W$, there exists a one-dimensional representation $\lam$ of $\g$ given by $W$  such that $g\cdot w_{0}=\lam(g)w_{0}$ for all $g\in \g$. Define
\begin{equation}
U:=\{v\in V\mid g\cdot v=\lam(g)v\textrm{ for all }g\in \g\}\,.\label{ }
\end{equation}
Then $W\subseteq U\subseteq V.$ We claim that $U$ is also an $L$-module. If this claim holds, the irreducibility of $V$ as an $L$-module, implies that $V=U$; thus $g\cdot v=\lam(g)v$ for all $g\in\g$ and $v\in V$.
Moreover, for any vector $\ell\in L$ but not in $\g$, let $J$ denote the one-dimensional subspace spanned by $\ell$. Then $L$ can be decomposed into the direct sum $\g\oplus J$ as  vector spaces. Let $v_{0}$ be an eigenvector of $\ell_{V}$ and let $V_{0}$
denote the one-dimensional subspace spanned by $v_{0}$. Then  $\ell \cdot v_{0} \in V_{0}\subseteq V$, which together with
the fact that $g\cdot v=\lam(g)v$ for all $g\in\g$ and $v\in V$, implies that $V_{0}$ is an $L$-submodule of $V$. As $V$ is irreducible, we have $V=V_{0}$. Hence, $\dim(V)=\dim(V_{0})=1$.

Therefore, to accomplish the proof, it is sufficient to prove the claim  that $U$ is an $L$-module.
For all $g,g'\in\g$ and $v\in U$, we see that $g\cdot(g' \cdot v)=g\cdot(\lam(g')v)=\lam(g')(g \cdot v)=\lam(g')\lam(g) v=
\lam(g)\lam(g') v=\lam(g)(g' \cdot v)$, i.e., $g'\cdot U\subseteq U$ for all $g'\in\g$. Thus it suffices to show that
$\ell \cdot U\subseteq U$; in other words,  we have to prove
that $g\cdot(\ell\cdot v)=\lam(g)(\ell \cdot v)$ for all $g\in\g$ and $v\in U$.
As $[\g,J]\subseteq\g$, we see that $$\lam([g,\ell]) v=[g,\ell]\cdot  v=g\cdot (\ell\cdot v)-\ell\cdot(g\cdot v)+\omega(g,\ell)v=g\cdot(\ell\cdot v)-\lam(g)(\ell\cdot v)+\omega(g,\ell)v\,.$$ Thus it suffices to show that
\begin{equation}
\lam([g,\ell])=\omega(g,\ell)\,. \label{ }
\end{equation}
To do this, we let $0\neq u\in U$ and define $u_{i}:=\ell\cdot u_{i-1}$ for $i\in\Z^{+}$, starting with $u_{0}:=u$ and $u_{1}:=\ell\cdot u$. Let $V'$ be the subspace spanned by $\{u_{i}\mid i\in\Z_{\geqslant 0}\}$. Since $V'\subseteq V$ and $\dim(V)$ is finite, there exists some $k\in \Z_{\geqslant 0}$ such that $V'$ has a basis $\{u_{0},u_{1},\dots,u_{k}\}$. Clearly, $\ell\cdot V'\subseteq V'$. Let $V'_{j}$ denote the subspace spanned by $u_{0},u_{1},\dots,u_{j}$ for $j=0,1,\dots,k$. Induction on $j$ shows that
$g\cdot u_{j}-\lam(g)u_{j}\in V_{j-1}'$ for all $g\in\g$. This means that $V'$ is an $L$-submodule of $V$.
As $V$ is irreducible, we have $V=V'$, and the resulting matrix $g_{V}$ can be written as an upper triangular matrix with the diagonals $\lam(g)$. Thus $\Tr(g_{V})=(k+1)\lam(g)$ for all $g\in\g$; in particular,
$\Tr([g,\ell]_{V})=(k+1)\lam([g,\ell])$. Since $[g,\ell]_{V}=g_{V}\circ\ell_{V}-\ell_{V}\circ g_{V}+\omega(g,\ell)1$, it follows that
$\Tr([g,\ell]_{V})=\Tr(\omega(g,\ell)1)=(k+1)\omega(g,\ell).$ This implies that $\lam([g,\ell])=\omega(g,\ell)$ and the proof is completed.
\end{proof}

%\subsection*{Applications} 
We provide two applications of Theorem \ref{mainthm2}.

\begin{proof}[Proof of Corollary \ref{coro1.3}]
As any ideal of a soluble $\omega$-Lie algebra is soluble,  this corollary  could be obtained directly from Theorem \ref{mainthm2} and Proposition \ref{prop2.2}.
\end{proof}

Recall that an $\omega$-Lie algebra $L$ is said to be \textbf{multiplicative} if there exists a linear form $\lam:L\longrightarrow \C$ such that
$\omega(x,y)=\lam([x,y])$ for all $x,y\in L$; see \cite[Section 2]{Zus10} and \cite[Section 6]{CZZZ18} for more results on multiplicative $\omega$-Lie algebras.

\begin{lemma}
Let $L$ be an $\omega$-Lie algebra. Then $\ker(\omega)=\{x\in L\mid \omega(x,y)=0  \textrm{ for all }y\in L\}$ is an $L$-module via the adjoint action.
\end{lemma}

\begin{proof}
Indeed, for all $x\in\ker(\omega)$ and $y,z\in L$, the $\omega$-Jacobi identity gives
$[[y,z],x]+[[z,x],y]+[[x,y],z]=\omega(y,z)x+\omega(z,x)y+\omega(x,y)z=\omega(y,z)x$. Then
$[[y,z],x]=[y,[z,x]]-[z,[y,x]]+\omega(y,z)x$ and hence $\ker(\omega)$ is an $L$-module.
\end{proof}

\begin{prop}
Let $L$ be a non-simple $\omega$-Lie algebra of degree 1 with a soluble ideal $\g$ of maximal dimension $\dim(L)-1$. If $\dim(L)>2$, then $L$ is multiplicative.
\end{prop}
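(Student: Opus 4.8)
The plan is to reduce multiplicativity to the existence of a one-dimensional $L$-module, and then to produce such a module from the radical $\ker(\ome)$.

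First I would record the elementary observation that $L$ is multiplicative precisely when it admits a one-dimensional module. Indeed, a one-dimensional $L$-module is the same datum as a linear form $\lam:L\ra\C$ acting by $u\cdot v=\lam(u)v$ on a line $\C v$, and the defining identity of an $\ome$-module forces
\begin{equation}
\lam([u,u'])v=\lam(u)\lam(u')v-\lam(u')\lam(u)v+\ome(u,u')v=\ome(u,u')v
\end{equation}
for all $u,u'\in L$, that is, $\ome(u,u')=\lam([u,u'])$. Hence it suffices to exhibit a single nonzero one-dimensional $L$-module.

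Next I would produce a nonzero $L$-module. By the Lemma above, $\ker(\ome)$ is an $L$-module under the adjoint action, so it is enough to check $\ker(\ome)\neq 0$ under the hypothesis $\dim(L)>2$. Since $\dim(L)\geqslant 3$, the argument in the proof of Theorem \ref{mainthm2} (via \cite[Corollary 3.2]{Zus2010}) shows that the codimension-one ideal $\g$ is a soluble Lie algebra, i.e. $\ome(\g,\g)=0$. Writing $L=\g\oplus\C\ell$ for some $\ell\notin\g$ and setting $\phi(g):=\ome(g,\ell)$, the skew-symmetric form $\ome$ is completely determined by the linear functional $\phi$ on $\g$. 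If $\phi=0$ then $\ome=0$ and $\ker(\ome)=L\neq 0$; if $\phi\neq 0$ a short computation gives $\ker(\ome)=\ker(\phi)\subseteq\g$, of dimension $\dim(\g)-1=\dim(L)-2\geqslant 1$. In either case $\ker(\ome)\neq 0$.

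Finally, I would choose a minimal nonzero $L$-submodule $S\subseteq\ker(\ome)$, which is irreducible. The hypotheses on $L$ and on the soluble ideal $\g$ are exactly those of Theorem \ref{mainthm2}, so that theorem applies and yields $\dim(S)=1$. By the first step this one-dimensional module furnishes a linear form $\lam$ with $\ome(u,u')=\lam([u,u'])$ for all $u,u'\in L$, so $L$ is multiplicative. I expect the only genuinely technical point to be the dimension count identifying $\ker(\ome)$ with $\ker(\phi)$ inside $\g$; everything else is bookkeeping between the two definitions. The essential structural input is that $\ome$ vanishes on the codimension-one soluble ideal $\g$, which simultaneously makes the radical of $\ome$ large enough to be nonzero and allows Theorem \ref{mainthm2} to force any irreducible constituent down to dimension one.
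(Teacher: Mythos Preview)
Your proof is correct and follows essentially the same route as the paper: show $\ker(\ome)\neq 0$, invoke the Lemma that $\ker(\ome)$ is an $L$-module, pick an irreducible submodule, apply Theorem~\ref{mainthm2} to get dimension one, and conclude multiplicativity. The only cosmetic difference is that the paper cites \cite[Lemma~8.1]{Zus2010} for the degeneracy of $\ome$ and \cite[Lemma~2.1]{Zus2010} for the passage from a one-dimensional module to multiplicativity, whereas you unfold both of these facts directly.
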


\begin{proof}
As $\dim(L)>2$, it follows from \cite[Lemma 8.1]{Zus10} that $\omega$ is degenerate. Then $\ker(\omega)$ is a nonzero $L$-module.
Let $W$ be an irreducible
$L$-submodule of $\ker(\omega)$. By Theorem \ref{mainthm2} we see that $\dim(W)=1$. It follows from \cite[Lemma 2.1]{Zus10} that $L$ is multiplicative.
\end{proof}

We give some remarks on modules and cohomology of $\omega$-Lie algebras. 
We refer to \cite[Section 6]{CZZZ18} for some fundamental  properties of modules for $\omega$-Lie algebras. The following example shows that  the cohomology groups $\HH^{n}(L,V)$ of an  $\omega$-Lie algebra $L$ with coefficients in an $L$-module $V$ cannot be defined by the same formula for the differential as for ordinary Lie algebras via the way of Chevalley–Eilenberg complex; compared with \cite{CE48}.

\begin{exam}{\rm  
Suppose $L$ is an $\omega$-Lie algebra and $V$ is an $L$-module. As in the Chevalley–Eilenberg complex, we define  the $\C$-vector space of $k$-cochains of $L$ with coefficients in $V$ to be $C^0(L,V):=V$ and $C^k(L,V):=\Hom_{\C}(\land^{k}L,V)$ for $k\geqslant 1$. The 
differential $d_k:C^k(L,V)\longrightarrow C^{k+1}(L,V)$ is defined as 
\begin{eqnarray*}
 d_k(f)(x_1,\dots,x_{k+1})& = &\sum_{i=1}^{k+1}(-1)^{i+1}x_i\cdot f(x_1,\dots,\widehat{x_i},\dots,x_{k+1})  \\
 &&  +\sum_{1\leqslant i<j\leqslant k+1}(-1)^{i+j}f([x_i,x_j],x_1,\dots,\widehat{x_i},\dots,\widehat{x_j},\dots,x_{k+1})\,.
\end{eqnarray*}
In particular, if $v\in C^0(L,V)=V$, then $d_0(v):L\longrightarrow V$ is given by $d_0(v)(x)=x\cdot v$ for all $x\in L$. For 
$f\in C^1(L,V)$, $d_1(f)\in C^2(L,V)$ is given by 
$$d_1(f)(x,y)=x\cdot f(y)-y\cdot f(x)-f([x,y])$$
 for all $x,y\in L$.
We observe that the map $d_1\circ d_0$ is not zero, unless $L$ is a Lie algebra. In fact, for $v\in V$ and $x,y\in L$,
\begin{eqnarray*}
(d_1\circ d_0)(v)(x,y) & = & d_1(d_0(v))(x,y) \\
 & = & x\cdot d_0(v)(y)-y\cdot d_0(v)(x)-d_0(v)([x,y])\\
 &=& x\cdot (y\cdot v)-y\cdot (x\cdot v)-[x,y]\cdot v\\
 &=&-\omega(x,y)v\,.
\end{eqnarray*}
The last equality follows from Eq. (\ref{eq2.1}). 
}\end{exam}

Moreover, let $L$ be an $\omega$-Lie algebra and $V, W$ be two $L$-modules. We also note that unlike the situation of ordinary Lie algebras, the map defined by $$(x,v\otimes w)\mapsto x\cdot v\otimes w+v\otimes x\cdot w$$ would not give an $L$-module structure on the tensor product $V\otimes W$, where $x\in L, v\in V$ and $w \in W$. However, 
for multiplicative $\omega$-Lie algebras we have the following proposition.

\begin{prop}
Let $L$ be a multiplicative $\omega$-Lie algebra with the linear form $\lam$ and $V, W$  be $L$-modules. Then
$V\otimes W$ is an $L$-module defined by
\begin{equation}
x\cdot (v\otimes w):=x\cdot v\otimes w+v\otimes x\cdot w-\lam(x)v\otimes w\,,\label{ }
\end{equation}
where $x\in L, v\in V$, and $w\in W$.
\end{prop}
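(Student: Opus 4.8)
The plan is to verify the single defining axiom of an $\ome$-Lie algebra module for the proposed action directly, treating it as a bookkeeping computation and isolating the one place where multiplicativity is essential. Writing $x\cdot(v\otimes w)$ for the proposed operation, I must check that
\[
[x,y]\cdot(v\otimes w)=x\cdot\bigl(y\cdot(v\otimes w)\bigr)-y\cdot\bigl(x\cdot(v\otimes w)\bigr)+\ome(x,y)(v\otimes w)
\]
holds for all $x,y\in L$, $v\in V$ and $w\in W$.

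First I would expand $y\cdot(v\otimes w)=yv\otimes w+v\otimes yw-\lam(y)v\otimes w$ and then apply $x\cdot$ to each of the three summands using the definition again, and symmetrically for $y\cdot(x\cdot(v\otimes w))$. Forming the difference, the terms on the two sides pair off: every term carrying a factor $\lam(x)$, $\lam(y)$, or $\lam(x)\lam(y)$ cancels against its mirror image, as do the mixed terms $yv\otimes xw$ and $xv\otimes yw$. What survives is exactly
\[
\bigl(x(yv)-y(xv)\bigr)\otimes w+v\otimes\bigl(x(yw)-y(xw)\bigr).
\]

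Next I would invoke the module axioms for $V$ and for $W$ separately, namely $x(yv)-y(xv)=[x,y]v-\ome(x,y)v$ and the analogous identity in $W$. Substituting these produces $[x,y]v\otimes w+v\otimes[x,y]w-2\ome(x,y)(v\otimes w)$, so after adding the correction term $\ome(x,y)(v\otimes w)$ demanded by the axiom the right-hand side becomes $[x,y]v\otimes w+v\otimes[x,y]w-\ome(x,y)(v\otimes w)$. On the other hand, the definition of the action applied to the element $[x,y]\in L$ gives $[x,y]\cdot(v\otimes w)=[x,y]v\otimes w+v\otimes[x,y]w-\lam([x,y])(v\otimes w)$, and the two expressions agree precisely because multiplicativity yields $\lam([x,y])=\ome(x,y)$.

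The computation itself is routine, and the only genuine obstacle is conceptual rather than technical: each of the two constituent modules contributes its own copy of $\ome(x,y)$, so the naive action $xv\otimes w+v\otimes xw$ would produce a spurious factor of $2\ome(x,y)$ and fail the axiom, consistent with the remark preceding the proposition that the usual tensor construction breaks down for $\ome$-Lie algebras. The role of the correction term $-\lam(x)v\otimes w$ is exactly to reserve one copy of $\ome(x,y)$, through $\lam([x,y])=\ome(x,y)$, so that the identity closes up; keeping track of this single cancellation is the heart of the argument.
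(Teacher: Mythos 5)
Your proposal is correct and follows essentially the same route as the paper: both verify the module axiom by expanding the double actions $x\cdot(y\cdot(v\otimes w))$ and $y\cdot(x\cdot(v\otimes w))$, observing that all terms involving $\lam(x)$, $\lam(y)$, $\lam(x)\lam(y)$ and the mixed tensors cancel, and then invoking multiplicativity $\lam([x,y])=\ome(x,y)$ to absorb the extra copy of $\ome(x,y)$ contributed by the second factor. The only difference is cosmetic (you rewrite $x(yv)-y(xv)$ via the module axiom before comparing, whereas the paper expands the left-hand side first), so the two arguments coincide term by term.
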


\begin{proof}
For an arbitrary element $y\in L$,  we have
$[y,x] \cdot (v\otimes w)=[y,x]\cdot v\otimes w+v\otimes [y,x]\cdot w-\lam([y,x])v\otimes w=y\cdot(x\cdot v)\otimes w-x\cdot (y\cdot v)\otimes w+\omega(y,x)v\otimes w+ v\otimes (y\cdot (x\cdot w))- v\otimes (x\cdot(y\cdot w))+ \omega(y,x)v\otimes w-\lam([y,x])v\otimes w=y\cdot (x\cdot v)\otimes w-x\cdot(y\cdot v)\otimes w+ v\otimes (y\cdot(x\cdot w))- v\otimes (x\cdot(y\cdot w))+\omega(y,x)v\otimes w.$ The last equality holds by the fact that $\lam([y,x])=\omega(y,x)$. On the other hand, we have
\begin{eqnarray*}
y\cdot(x\cdot(v\otimes w))&=&y\cdot(x\cdot v\otimes w+v\otimes x\cdot w-\lam(x)v\otimes w) \\
&=&y\cdot(x\cdot v)\otimes w+x\cdot v\otimes y\cdot w-\lam(y)x\cdot v\otimes w\\&&+y\cdot v\otimes x\cdot w+v\otimes y\cdot(x\cdot w)-\lam(y)v\otimes x\cdot w\\
&&-\lam(x)y\cdot v\otimes w-\lam(x)v\otimes y\cdot w+\lam(x)\lam(y)v\otimes w\,,\\
x\cdot (y\cdot (v\otimes w))&=&x\cdot(y\cdot v\otimes w+v\otimes (y\cdot w)-\lam(y)v\otimes w) \\
&=&x\cdot(y\cdot v)\otimes w+y\cdot v\otimes x\cdot w-\lam(x)y\cdot v\otimes w\\&&+x\cdot v\otimes y\cdot w+v\otimes x\cdot(y\cdot w)-\lam(x)v\otimes y\cdot w\\
&&-\lam(y)x\cdot v\otimes w-\lam(y)v\otimes x\cdot w+\lam(y)\lam(x)v\otimes w\,.
\end{eqnarray*}
Then $y\cdot (x\cdot (v\otimes w))-x\cdot (y\cdot (v\otimes w))+\omega(y,x)v\otimes w=[y,x]\cdot (v\otimes w)$, which implies that
$V\otimes W$ is an $L$-module.
\end{proof}

%For a multiplicative $\omega$-Lie algebra $L$  and   an $L$-module $V$,  the $L$-module structure on $\otimes^{k}V$, $k\in\Z^+$ induces an $L$-module structure on the exterior product $\land^{k}V$. 

Note that the adjoint map does not give an $L$-module structure on $L$, unless $L$ is a Lie algebra.
The following example demonstrates that for $k\in\Z^+$, the space of $k$-cochains $C^{k}(L,V)$ might not be an $L$-module via the formula
\begin{equation}
(x\cdot f)(z_{1},\cdots,z_{k}):=x\cdot(f(z_{1},\cdots,z_{k}))-\sum_{i=1}^{k}f(z_{1},\cdots z_{i-1},[x,z_{i}],\cdots,z_{k})\,,\label{eq2.5}
\end{equation}
where $x,z_{1},\dots,z_{k}\in L$ and $f\in C^{k}(L,V)$.

\begin{exam}{\rm
Consider $k=1$ and $C^{1}(L,V)=\Hom_{\C}(L,V)$. For $x,y,z\in L$, the formula (\ref{eq2.5}) reads to
$(x\cdot f)(z)=x\cdot f(z)-f([x,z]).$ Thus $([x,y]\cdot f)(z) = [x,y]\cdot f(z)-f([[x,y],z])=x\cdot (y\cdot f(z))-y\cdot (x\cdot f(z))+\omega(x,y)f(z)-f([[x,y],z])$. On the other hand, we note that
\begin{eqnarray*}
x\cdot (y\cdot f)(z)&=&x\cdot((y\cdot f)(z))-(y\cdot f)([x,z])\\
&=&x\cdot(y\cdot f(z))-x\cdot f([y,z])-y\cdot f([x,z])+f([y,[x,z]])
\end{eqnarray*}
and $$y\cdot (x\cdot f)(z)=y\cdot (x\cdot f(z))-y\cdot f([x,z])-x\cdot f([y,z])+f([x,[y,z]])\,.$$
Thus
\begin{eqnarray*}
&&([x,y]\cdot f)(z)-x\cdot (y\cdot f)(z) +y\cdot (x\cdot f)(z)-\omega(x,y)f(z)\\
 &=& f([x,[y,z]])-f([[x,y],z])-f([y,[x,z]])\\
&=&-f(\omega(y,z)x+\omega(x,y)z+\omega(z,x)y)\,,
\end{eqnarray*}
which does not vanish in general, unless $L$ is a Lie algebra. 
}\end{exam}

\section{Indecomposable  modules} \label{sec3}

In this section we study indecomposable modules of some three-dimensional  $\omega$-Lie algebras and give a proof of Theorem \ref{mainthm3}.

Let $L\in\LL$ be a  three-dimensional non-Lie $\omega$-Lie algebra over $\C$ with a basis $\{x,y,z\}$.
It follows from \cite[Theorem 2]{CLZ14} that there always exists a two-dimensional Lie subalgebra $\g\subset L$, spanned by $y$ and $z$ such that $[y,z]=z$. Define $\h$ to be the subspace spanned by  $z$. Clearly, $\g$ is isomorphic to the unique  two-dimensional nonabelian Lie algebra over $\C$ and $\h$ can be viewed as an abelian Lie algebra.  Throughout this section we assume that the element $z$ belongs to $\ker(\omega)$; namely, 
$L\in\{L_{1},A_{\alpha}\}$.

Suppose $V$ is a finite-dimensional indecomposable  $L$-module. Since $V$ is also an $\h$-module, there exists a finite set $\{\lam_{1},\dots,\lam_{k}\}$ of weights of $\h$ such that
\begin{equation}
V=\bigoplus_{i=1}^{k}V_{\lam_{i}}\,,\label{eq3.1}
\end{equation}
where $V_{\lam_{i}}:=\{v\in V\mid\textrm{for each }h\in \h,\textrm{ there exists }n_{h}\textrm{ such that }
(h_{V}-\lam_{i}(h) 1)^{n_{h}}(v)=0\}\neq \{0\}$. Further, these $V_{\lam_{i}}$ are $\h$-modules; see \cite[Theorem 2.9]{Car05}.

Note that $\h\subset\g\subset L$ and $V$ is also a $\g$-module.  With above notations and conventions, we obtain several helpful lemmas.

\begin{lemma}\label{lem3.1}
For $1\leqslant i\leqslant k$, $V_{\lam_{i}}$ is a $\g$-module.
\end{lemma}

\begin{proof}
It suffices to show that  $y\cdot v\in V_{\lam_{i}}$ for all $y\in \g$ and $v\in V_{\lam_{i}}$. Consider the Lie algebra
$\g$ and the $\g$-module $V$. Since $\omega(y,z)=0$ in $L$, an analogous argument with \cite[Proposition 2.7]{Car05} implies that  for $h\in\h, \lam_{i}(h)\in\C$ and $v\in V_{\lam_{i}}$, we have
\begin{equation}
(h_{V}-\lam_{i}(h)1)^{n}(y\cdot v)=\sum_{j=0}^{n}{n\choose j}((\ad_{h})^{j}(y))(h_{V}-\lam_{i}(h)1)^{n-j}(v) \label{eq3.2}
\end{equation}
for $n\in\Z^{+}$. Note that $h=az$ for some $a\in\C$ and $[y,z]=z$. Setting $n=n_{h}+1$ in Eq. (\ref{eq3.2}) we see that
$(h_{V}-\lam_{i}(h)1)^{n_{h}+1}(y\cdot v)=0$. This means  $y\cdot v\in V_{\lam_{i}}$ and thus $V_{\lam_{i}}$ is a $\g$-module.
\end{proof}

Let $\D:=(L\oplus V,\Omega)$ be the semi-direct product of an $\omega$-Lie algebra $(L,\omega)$ and an $L$-module $V$, where $\Omega$ extends $\omega$ trivially; see \cite[Proposition 6.3]{CZZZ18} for the definition of the semi-direct product of an $\omega$-Lie algebra and its module.

\begin{lemma}\label{lem3.2}
There is an abelian Lie subalgebra $H$ of $\D$ such that $H\subseteq \ker(\Omega)$ and $\dim(H)>1$.
\end{lemma}

\begin{proof}
If $V$ is a trivial $\h$-module, i.e., $z\cdot v=0$ for all $v\in V$, then $H=\h\oplus V$ is what we want.
Now assume that $V$ is a nontrivial $\h$-module and consider the Lie subalgebra $\g\oplus V$ of $\D$.
We observe that $\g\oplus V$ is a soluble Lie algebra, thus $[\g\oplus V,\g\oplus V]$ is nilpotent. Since $[y,z]=z$, we have
$\h\oplus\{0\}\subseteq [\g\oplus V,\g\oplus V]\subseteq \h\oplus V.$ As $V$ is not a trivial $\h$-module, we can find a vector
$v_{0}\in V$ such that $z\cdot v_{0}\neq 0$. Thus $(0,z\cdot v_{0})=[(z,0),(z,v_{0})]\in [\g\oplus V,\g\oplus V]$ but not in $\h\oplus\{0\}$. This implies that $\dim([\g\oplus V,\g\oplus V])>\dim(\h)=1$. Let $V'\subseteq V$ be the subspace such that $[\g\oplus V,\g\oplus V]=\h\oplus V'$. Then $\dim(V')\geqslant 1$.
By Engel's theorem, $\ad_{(z,0)}: \h\oplus V'\longrightarrow \h\oplus V'$ is nilpotent, and it also restricts to a nilpotent linear map on $V'$.
We use $V_{1}$ to denote the kernel of $\ad_{(z,0)}$ in $V'$. Then $V_{1}\neq\{0\}$ and so $\dim(V_{1})\geqslant 1$.
Note that for any $v\in V_{1}$, the fact that $0=\ad_{(z,0)}(0,v)=[(z,0),(0,v)]=(0,z\cdot v)$ implies $z\cdot v=0$, thus the action of $\h$ on $V_{1}$ is trivial.
Let $H=\h\oplus V_{1}$. Observe that $H$ is an abelian Lie subalgebra of $\D$ such that $H\subseteq \ker(\Omega)$ and $\dim(H)>1$. The proof is completed.
\end{proof}

\begin{lemma}\label{lem3.3}
Let $\ad:\D\longrightarrow\D$ be the adjoint map. Then
\begin{eqnarray}
&&\sum_{j=0}^{n}{n\choose j} \left[ (\ad_{h}+\alpha 1)^{n-j}(u), (\ad_{h}+\beta 1)^{j}(v)\right]\nonumber \\
&=&(\ad_{h}+(\alpha+\beta)1)^{n}([u,v])-n(\alpha+\beta)^{n-1}\Omega(u,v)h\label{eq3.3}
\end{eqnarray}
for all $n\in\Z^+, u,v\in \D,h\in H$ and $\alpha,\beta\in\C$.
\end{lemma}

\begin{proof}
We apply \cite[Lemma 4.4]{Zus10}  for $\D=(L\oplus V,\Omega)$ with $H$ defined in Lemma \ref{lem3.2}.
\end{proof}

We identify $L$ with $L\oplus \{0\}$ and identify $V$ with $\{0\}\oplus V$ in $\D$. With this two identifications, we are working on $\D$. In Eq. (\ref{eq3.3}), setting $\alpha=0, h\in \h=\h\oplus\{0\}$, $u=x\in L$ and $v\in V$, we obtain the following lemma.

\begin{lemma} For any $n\in\Z^+$ and $\beta\in\C$,
\begin{equation}
(\ad_{h}+\beta1)^{n}([x,v])=\sum_{j=0}^{n}{n\choose j} \left[ (\ad_{h})^{n-j}(x), (\ad_{h}+\beta 1)^{j}(v)\right]\,.\label{eq3.4}
\end{equation}
\end{lemma}

Finally, we prove the following key lemma.

\begin{lemma}\label{lem3.5}
$V_{\lam_{i}}$ is an $L$-module for $1\leqslant i\leqslant k$.
\end{lemma}

\begin{proof}
By Lemma \ref{lem3.1} it suffices to show that $x\cdot v\in V_{\lam_{i}}$ for all $v\in V_{\lam_{i}}$. We observe that
$[x,v]=[(x,0),(0,v)]=(0,x\cdot v)=x\cdot v$ and for  $w\in V$, $(\ad_{h}+\beta1)(w)=\ad_{h}(w)+\beta 1(w)=[h,w]+\beta1(w)=
[(h,0),(0,w)]+\beta1(0,w)=(0,h\cdot w)+(0,\beta1 (w))=(h_{V}+\beta1)w$. Thus
$(\ad_{h}+\beta1)^{n}(w)=(h_{V}+\beta 1)^{n}(w)$ for all $w\in V$ and $n\in\Z^+$.
These observations, together with setting  $\beta=-\lam_{i}(h)$ in Eq. (\ref{eq3.4}), imply that
\begin{equation}
(h_{V}-\lam_{i}(h)1)^{n}(x\cdot v)=\sum_{j=0}^{n}{n\choose j} \left[ (\ad_{h})^{n-j}(x), (h_{V}-\lam_{i}(h) 1)^{j}(v)\right]\,.\label{eq3.5}
\end{equation}
Recall that $h=a z$ for some $a\in\C$ and $[z,[z,[z,x]]]=0$ in $L$. Thus $\ad_{h}^{j}(x)=0$ for $j\geqslant 3$.
Taking $n=n_{h}+2$ in Eq. (\ref{eq3.5}), we obtain $(h_{V}-\lam_{i}(h)1)^{n_{h}}(x\cdot v)=0$. Hence, $V_{\lam_{i}}$ is an $L$-module.
\end{proof}

An important consequence has been derived.

\begin{coro}\label{coro3.6}
$k=1$ in Eq. (\ref{eq3.1}).
\end{coro}

\begin{proof}
As $V$ is indecomposable, Lemma \ref{lem3.5} implies $k=1$.
\end{proof}

Suppose $n\in\Z^{+}$ and $M_{n}(\C)$ denotes the $n^{2}$-dimensional vector space of all $n\times n$ matrices over $\C$. Let $N_{n}(\C)$ be the set of all nilpotent matrices in $M_{n}(\C)$ and $D_{n}(\C)=\{\lam \mathrm{I_n}\mid \lam\in\C\}$ be
the subspace spanned by the identity matrix $\mathrm{I_n}$ in $M_{n}(\C)$. Clearly, $N_{n}(\C)\cap D_{n}(\C)=\{0\}$.
Define $$P_{n}(\C):=D_{n}(\C)\times N_{n}(\C)\,.$$ There exists a natural conjugacy action of the general linear group $\GL(n,\C)$ on $P_{n}(\C)$ given by
\begin{equation}
\sigma (\lam \mathrm{I_n}, A):=(\sigma (\lam \mathrm{I_n})\sigma^{-1}, \sigma A \sigma^{-1})=(\lam \mathrm{I_n}, \sigma A \sigma^{-1})\,, \label{ }
\end{equation}
where $\sigma\in \GL(n,\C)$, $\lam\in\C$ and $A\in N_{n}(\C)$.

We use $\R_{n}^0(\C)$ to denote the set of all indecomposable $L$-modules on $\C^{n}$ such that the actions of $x$ and $y$ on $\C^n$  are determined by the action of $z$. Let $\B_{n}(\C)$ be the set of all $\h$-modules on $\C^{n}$ and $\A_{n}(\C)$ be the subset of $\B_{n}(\C)$ consisting of all $\h$-modules for which the resulting matrix of $z$ on $\C^{n}$ can be written as the sum of two matrices from the components of $P_{n}(\C)$.

\begin{prop}\label{prop3.7}
There exists an injective map $\phi$ from $\R_{n}^0(\C)$ to $\A_{n}(\C)$.
\end{prop}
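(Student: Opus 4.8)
The plan is to let $\phi$ be the restriction of an $L$-module to the subalgebra $\h=\langle z\rangle$: given $V\in\R_{n}(\C)$ realized on $\C^{n}$, forget the actions of $x$ and $y$ and keep only the operator $z_{V}$, so that $\phi(V)$ is the $\h$-module $(\C^{n},z_{V})$. Two things must be verified, namely that $\phi$ actually lands in $\A_{n}(\C)$ and that it is injective.

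For the first point I would quote Corollary \ref{coro3.6}. Because $V$ is indecomposable, that corollary forces $k=1$ in the decomposition (\ref{eq3.1}), so $V=V_{\lam_{1}}$ is a single generalized weight space of $\h$. Hence $z_{V}$ has the unique eigenvalue $\lam_{1}(z)$, and the splitting $z_{V}=\lam_{1}(z)1+\bigl(z_{V}-\lam_{1}(z)1\bigr)$ writes $z_{V}$ as a scalar matrix in $D_{n}(\C)$ plus a nilpotent matrix in $N_{n}(\C)$. Therefore $z_{V}\in D_{n}(\C)\oplus N_{n}(\C)=P_{n}(\C)$, which is precisely the membership defining $\A_{n}(\C)$; so $\phi$ is well defined as a map $\R_{n}(\C)\to\A_{n}(\C)$.

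For injectivity, suppose $V,V'\in\R_{n}(\C)$ share the same image, so they live on the same $\C^{n}$ and $z_{V}=z_{V'}=:Z$; the goal is to recover the remaining operators from $Z$ and thereby force $x_{V}=x_{V'}$ and $y_{V}=y_{V'}$. The mechanism is the module identity $[a,b]_{V}=a_{V}b_{V}-b_{V}a_{V}+\ome(a,b)1$ applied to the defining brackets of $L$. For $A_{\alpha}$ the relation $[x,z]=y-z$ becomes $[x_{V},Z]=y_{V}-Z$, so $y_{V}=[x_{V},Z]+Z$ is already determined by $x_{V}$ and $Z$; substituting this into $[y,z]=z$ and $[x,y]=x+\alpha z$ produces a closed system of operator equations for the single unknown $x_{V}$ over the fixed datum $Z$. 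For $L_{1}$ one proceeds in the same spirit through $[x,z]=0$, $[y,z]=z$ and $[x,y]=y$.

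The step I expect to be the main obstacle is proving that this system pins $x_{V}$ (and then $y_{V}$) down uniquely, for only then does $\phi(V)=\phi(V')$ give $V=V'$. Here I would use the single-weight conclusion of Corollary \ref{coro3.6} together with indecomposability: passing to a basis in which $Z$ is in Jordan form, the bracket relations with $Z$ confine $x_{V}$ and $y_{V}$ to explicit affine conditions, and indecomposability removes the residual freedom, which otherwise corresponds to splitting off an invariant complement. Making this rigidity precise — for instance by an induction along the nilpotent filtration of $Z$, or by showing that any intertwiner between two solutions must be scalar on an indecomposable module and hence trivial — is the technical heart of the argument. Once uniqueness is in hand, $\phi$ is injective and the proof is complete.
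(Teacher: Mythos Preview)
Your argument that $\phi$ lands in $\A_n(\C)$ is correct and coincides with the paper's: Corollary~\ref{coro3.6} forces $k=1$, so $z_V$ has the single eigenvalue $\lam_1(z)$ and hence $z_V=\lam_1(z)\cdot 1+(z_V-\lam_1(z)\cdot 1)\in D_n(\C)\oplus N_n(\C)=P_n(\C)$.

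Your injectivity argument, however, diverges from the paper and contains a real gap. You propose to reconstruct $x_V$ and $y_V$ from $z_V$ via the module identities, and then invoke indecomposability to eliminate the residual freedom. But this reconstruction is impossible in general: the paper's own Example~3.9 exhibits, for $L_1$ and $n=2$, two entire families of indecomposable modules (parametrized by $b,c\in\C$) all sharing $z_V=0$. Thus even among indecomposable modules the operator $z_V$ does not determine $x_V$ and $y_V$; neither the nilpotent-filtration induction nor the Schur-type intertwiner argument you sketch can pin them down, because the conclusion you are aiming at is simply false over that fibre. Your own hesitation (``the step I expect to be the main obstacle'') is well founded --- the obstacle is not merely technical.

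The paper argues injectivity quite differently and far more tersely: it records that $\bigl(\lam(z)1,\,z_{V_\lam}-\lam(z)1\bigr)$ is the Jordan--Chevalley decomposition of $z_{\phi(V)}$, so $z_{\phi(V_1)}=z_{\phi(V_2)}$ forces $\lam_1=\lam_2$ by uniqueness of that decomposition, and then concludes with the line $V_1=V_{\lam_1}=V_{\lam_2}=V_2$. There is no attempt to recover the full $L$-action from $z_V$.
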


\begin{proof}
For each $V\in \R_{n}^0(\C)$,  it is also an $\h$-module. Corollary \ref{coro3.6} shows that $V$ is isomorphic to some $V_{\lam}$ for $\lam\in\Hom(\h,\C)$. Note that $\dim(\h)=1$ and $\h$ is spanned by $z$, so $\lam$ is determined by the complex number $\lam(z)$. Since $z_{V_{\lam}}-\lam(z)\mathrm{I_n}\in N_{n}(\C)$, we have $z_{V_{\lam}}=\lam(z)\mathrm{I_n}+(z_{V_{\lam}}-\lam(z)\mathrm{I_n})$, where $(\lam(z)\mathrm{I_n}, z_{V_{\lam}}-\lam(z)\mathrm{I_n})\in P_{n}(\C)$. Now we define
$$\phi:\R_{n}^0(\C)\longrightarrow\A_{n}(\C)$$ by $V\mapsto \phi(V)$, where $\phi(V)$ is determined uniquely by $z_{\phi(V)}=\lam(z)\mathrm{I_n}+(z_{V_{\lam}}-\lam(z)\mathrm{I_n})$. For any $V_{1},V_{2}\in  \R_{n}^0(\C)$, there exist
$\lam_{1},\lam_{2}\in\Hom(\h,\C)$ such that $V_{i}=V_{\lam_{i}}$ for $i=1,2$. As $\lam(z)\mathrm{I_n}$ and $z_{V_{\lam}}-\lam(z)\mathrm{I_n}$ are the semisimple and nilpotent parts respectively in the Jordan-Chevalley decomposition in $z_{\phi(V)}$, the uniqueness of the decomposition implies that if $z_{\phi(V_{1})}=z_{\phi(V_{2})}$, then $\lam_{1}=\lam_{2}$. Thus $V_{1}=V_{\lam_{1}}=V_{\lam_{2}}=V_{2}$.
This means that $\phi$ is injective.
\end{proof}

\begin{prop}\label{prop3.8}
There exists a bijection between $\A_{n}(\C)$ and $P_{n}(\C)$. Moreover,
the equivalence classes in $\A_{n}(\C)$ are in one-to-one correspondence with  the conjugacy classes in $P_{n}(\C)$.
\end{prop}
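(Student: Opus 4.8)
The plan is to exploit the fact that $\h=\C z$ is a one-dimensional abelian Lie algebra on which $\ome$ vanishes, since $z\in\ker(\ome)$, so that the $\h$-module axiom $[z,z]v=z(zv)-z(zv)+\ome(z,z)v$ is vacuous. Consequently an $\h$-module structure on $\C^{n}$ is nothing more than the single datum of the operator $z_{V}\in M_{n}(\C)$, with no further constraint: every operator occurs, and distinct operators give distinct module structures. This identification is the whole engine of the proof.

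First I would define the map $\Psi:\A_{n}(\C)\to P_{n}(\C)$ by $V\mapsto z_{V}$. By the definition of $\A_{n}(\C)$ this is well-defined, and by the previous observation it is both injective, since the module structure is determined by $z_{V}$, and surjective, since any $M\in P_{n}(\C)$ is realized by the $\h$-module with $z$ acting as $M$. This establishes the asserted bijection between $\A_{n}(\C)$ and $P_{n}(\C)$.

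Second, for the statement about equivalence classes I would recall that an equivalence of $\h$-modules $V_{1},V_{2}$ on $\C^{n}$ is an invertible intertwiner $\sigma\in\GL(n,\C)$, i.e. $\sigma z_{V_{1}}=z_{V_{2}}\sigma$, equivalently $z_{V_{2}}=\sigma z_{V_{1}}\sigma^{-1}$. Thus under $\Psi$ the relation of module equivalence on $\A_{n}(\C)$ corresponds exactly to $\GL(n,\C)$-conjugacy on $P_{n}(\C)$. I would then check that $P_{n}(\C)$ is stable under conjugation: writing $M=\lam 1+A$ with $A\in N_{n}(\C)$, one has $\sigma M\sigma^{-1}=\lam 1+\sigma A\sigma^{-1}$ with $\sigma A\sigma^{-1}$ again nilpotent, and that this conjugation is precisely the action displayed just before the statement. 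Hence $\Psi$ is $\GL(n,\C)$-equivariant and descends to a bijection between the equivalence classes in $\A_{n}(\C)$ and the conjugacy classes in $P_{n}(\C)$.

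The computations here are entirely routine; the only point requiring a little care is the identification of module equivalence with the given conjugacy action, for which the stability of $P_{n}(\C)$ under conjugation, a consequence of the uniqueness of the Jordan--Chevalley decomposition already invoked in Proposition \ref{prop3.7}, is the essential ingredient. I expect no serious obstacle beyond this bookkeeping.
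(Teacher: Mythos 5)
Your proposal is correct and follows essentially the same route as the paper: both identify an $\h$-module structure on $\C^{n}$ with the single matrix $z_{V}$ (possible because $\h$ is one-dimensional and abelian), obtain the bijection $V\mapsto z_{V}$ with inverse $B\mapsto V_{B}$, and translate module equivalence into similarity, hence conjugacy, of matrices. Your explicit check that $P_{n}(\C)$ is stable under conjugation is a small point the paper leaves implicit, but it does not change the argument.
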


\begin{proof}
Since $\h$ is one-dimensional and spanned by $z$, any $\h$-module $V$ in $\A_{n}(\C)$ is determined by the matrix $z_{V}=\lam(z)\mathrm{I_n}+(z_{V_{\lam}}-\lam(z)\mathrm{I_n})$, where $(\lam(z)\mathrm{I_n}, z_{V_{\lam}}-\lam(z)\mathrm{I_n})\in P_{n}(\C)$. If $V\in \A_{n}(\C)$, then $\varphi(V):=(\lam(z)\mathrm{I_n}, z_{V_{\lam}}-\lam(z)\mathrm{I_n})$ gives rise to a map from $\A_{n}(\C)$ to $P_{n}(\C)$. Conversely, as $\h$ is one-dimensional, any matrix $B\in M_{n}(\C)$ could define an $\h$-module $V_{B}$ by $z_{V_{B}}=B$. If $(\lam(z)\mathrm{I_n}, B-\lam(z)\mathrm{I_n})\in P_{n}(\C)$, then $V_{B}\in \A_{n}(\C)$. Let $\varphi': P_{n}(\C)\longrightarrow \A_{n}(\C)$ be the map given by $\varphi'(B)=V_{B}$. Clearly,
$\varphi\circ\varphi'=1_{P_{n}(\C)}$ and $\varphi'\circ\varphi=1_{\A_{n}(\C)}$. Hence, $\varphi$ is a bijection between $\A_{n}(\C)$ and $P_{n}(\C)$.
Note that $V_{1}$ is equivalent to $V_{2}$ in $\A_{n}(\C)$ if and only if $z_{V_{1}}$ and $z_{V_{2}}$ are similar, if and only if
$z_{V_{1}}$ is conjugate with $z_{V_{2}}$ in $P_{n}(\C)$. This proves the second statement.
\end{proof}

\begin{proof}[Proof of Theorem \ref{mainthm3}]
Combining Propositions \ref{prop3.7} and \ref{prop3.8}, together with the fact that if $V_{1}$ is equivalent to $V_{2}$ in $\R_{n}(\C)$ then $\phi(V_{1})$ and $\phi(V_{2})$ are also equivalent in $\A_{n}(\C)$, we see that
the actions of $z\in L$ in two equivalent representations can be parameterized by the complex field and
conjugacy classes of nilpotent $n\times n$-matrices. By the nonzero generating relations in $L$, we see that the actions of 
$x$ and $y$ on $\C^n$ can be determined by finitely many polynomial equations. Thus an arbitrary action of $L$ on $\C^n$ 
can be determined by a complex number, a nilpotent matrix and two elements of an affine variety. 
This completes the proof.
\end{proof}

As an application, we conclude with the following example.

\begin{exam}{\rm
We can completely determine all 2-dimensional indecomposable $L_{1}$-modules. Suppose $V$ is  such a module.  Recall that any $2\times 2$ nilpotent matrix is similar to $(\begin{smallmatrix}
      0&0    \\
      0&0
\end{smallmatrix})$ or $(\begin{smallmatrix}
      0&1    \\
      0&0
\end{smallmatrix})$. 

For the first case, we  may assume  $$z_{V}=\begin{pmatrix}
      a&    0\\
     0 &a
\end{pmatrix}, y_{V}=\begin{pmatrix}
     b_{1} &   b_{3} \\
     b_{2} &  b_{4}
\end{pmatrix} \textrm{ and } x_{V}=\begin{pmatrix}
     c_{1} &   c_{3} \\
     c_{2} &  c_{4}
\end{pmatrix}$$ with respect to a basis $\{e_{1},e_{2}\}$ of $V$, where $a, b_i, c_i\in \C, 1\leqslant i\leqslant 4$.
By Eq. (\ref{eq2.1}), we obtain two subcases: \begin{enumerate}
\item  $z_{V}=\begin{pmatrix}
      0&    0\\
     0 &0
\end{pmatrix}$, $y_{V}=\begin{pmatrix}
     1 &   1 \\
     0&  1
\end{pmatrix}$ and $x_{V}=\begin{pmatrix}
     c+1 &   b \\
     0&  c
\end{pmatrix}$;
\item $z_{V}=\begin{pmatrix}
      0&    0\\
     0 &0
\end{pmatrix}$, $y_{V}=\begin{pmatrix}
     1 &   0 \\
     0&  1
\end{pmatrix}$ and $x_{V}=\begin{pmatrix}
     c &   1 \\
     0&  c
\end{pmatrix}$,
\end{enumerate}
 where $b, c\in \C$. 

For the second case, we  assume  $$z_{V}=\begin{pmatrix}
      a&    1\\
     0 &a
\end{pmatrix},   y_{V}=\begin{pmatrix}
     b_{1} &   b_{3} \\
     b_{2} &  b_{4}
\end{pmatrix} \textrm{ and }  x_{V}=\begin{pmatrix}
     c_{1} &   c_{3} \\
     c_{2} &  c_{4}
\end{pmatrix}$$ with respect to a basis $\{e_{1},e_{2}\}$ of $V$, where $a, b_i, c_i\in \C, 1\leqslant i\leqslant 4$. A direct calculation leads to $3/2=b_{1}=1$, which is a contradiction. It also shows that the map $\phi$ in Proposition \ref{prop3.7} is not surjective. 
}\end{exam}

\section{Tailed derivations of Lie algebras}

The last section is mainly to study relations between one-dimensional $\omega$-extensions of a Lie algebra $\g$ and tailed derivations of $\g$, focusing on fundamental properties and examples on tailed derivations of Lie algebras and giving a proof of Theorem  \ref{mainthm4}.

 \begin{prop} \label{prop4.1}
Let $A$ be a nonassociative algebra.  Then
$\TDer(A)$ is a Lie subalgebra of $\gl(A)$.
\end{prop}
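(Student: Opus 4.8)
The plan is to verify the two conditions that make $\TDer(A)$ a Lie subalgebra of $\gl(A)$: that it is a linear subspace, and that it is closed under the commutator bracket $[D_{1},D_{2}]=D_{1}D_{2}-D_{2}D_{1}$. The subspace condition is immediate: given $D_{1},D_{2}\in\TDer(A)$ with respective tails $d^{(1)},d^{(2)}$ and scalars $\alpha,\beta\in\C$, the map $\alpha D_{1}+\beta D_{2}$ is again a tailed derivation, with tail $\alpha d^{(1)}+\beta d^{(2)}$; this follows at once from bilinearity of the bracket and linearity of all the maps involved, and $0$ is a tailed derivation with zero tail, so $\TDer(A)$ is a subspace of $\gl(A)$.

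The substance of the argument is closure under the bracket. I would set $D=D_{1}D_{2}-D_{2}D_{1}$ and compute $D([y,z])$ by expanding the defining identity twice. Applying $D_{2}$ first and then $D_{1}$ to $[y,z]$, and invoking the tailed-derivation identity for each $D_{i}$ at every stage, expresses $D_{1}D_{2}([y,z])$ as a sum of the second-order brackets $[D_{1}D_{2}\,y,z]$ and $[y,D_{1}D_{2}\,z]$, the mixed brackets $[D_{2}y,D_{1}z]$ and $[D_{1}y,D_{2}z]$, and a handful of tail terms. Interchanging the indices $1$ and $2$ gives the corresponding expansion of $D_{2}D_{1}([y,z])$, and the key step is to subtract the two and track what survives.

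Carrying out the cancellation, the mixed brackets $[D_{2}y,D_{1}z]$ and $[D_{1}y,D_{2}z]$ occur symmetrically and cancel, as do the tail terms of the shape $d^{(i)}_{z}D_{j}y$ and $d^{(i)}_{y}D_{j}z$, while the second-order brackets combine into $[Dy,z]+[y,Dz]$. What remains is exactly
\[
(d^{(1)}_{D_{2}z}-d^{(2)}_{D_{1}z})\,y-(d^{(1)}_{D_{2}y}-d^{(2)}_{D_{1}y})\,z .
\]
The crucial observation is that, setting the linear functional $d:=d^{(1)}\circ D_{2}-d^{(2)}\circ D_{1}$ (linear since each $d^{(i)}$ and each $D_{j}$ is), this leftover reads $d_{z}\,y-d_{y}\,z$, whence
\[
D([y,z])=[Dy,z]+[y,Dz]+d_{z}\,y-d_{y}\,z ,
\]
exhibiting $D=[D_{1},D_{2}]$ as a tailed derivation with tail $d$.

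I expect the only genuine obstacle to be the bookkeeping in the double expansion, namely keeping account of the roughly ten terms in each of $D_{1}D_{2}([y,z])$ and $D_{2}D_{1}([y,z])$ and confirming that everything except the desired pieces cancels, together with the moment of recognizing that the surviving terms assemble into the single tail $d^{(1)}\circ D_{2}-d^{(2)}\circ D_{1}$. No structural information about $A$ is required; the argument is purely formal, relying only on linearity and the defining tailed-derivation identity.
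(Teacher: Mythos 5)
Your proposal is correct and follows essentially the same route as the paper's proof: verify the subspace property directly, expand $D_{1}D_{2}([y,z])$ and $D_{2}D_{1}([y,z])$ via the defining identity, observe that the mixed brackets and first-order tail terms cancel in the difference, and identify the tail of the commutator as the linear functional $d^{(1)}\circ D_{2}-d^{(2)}\circ D_{1}$ (the paper's $d\circ T-t\circ D$). The cancellations you describe are exactly those occurring in the paper's computation, so the only remaining work is the routine bookkeeping you anticipate.
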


\begin{proof}
Suppose $D, T\in\TDer(A)$ are arbitrary tailed derivations.  For $y,z\in A$, we have
\begin{eqnarray*}
&&(D+T)([y,z])\\
&=&D([y,z])+T([y,z])\\
&=&[D(y),z]+[y,D(z)]+d_{z}y-d_{y}z+[T(y),z]+[y,T(z)]+t_{z}y-t_{y}z\\
&=&[(D+T)(y),z]+[y,(D+T)(z)]+(d_{z}+t_{z})y-(d_{y}+t_{y})z\,,
\end{eqnarray*}
where $d_{y},t_{y},d_{z},t_{z}\in \C$. For $a\in \C$, we see that
$(aD)([y,z])=a(D[y,z])=a([D(y),z]+[y,D(z)]+d_{z}y-d_{y}z)=[(aD)(y),z]+[y,(aD)(z)]+ad_{z}y-ad_{y}z$. This means that $\TDer(A)$ is a subspace of $\gl(A)$. To show $\TDer(A)$ is a Lie subalgebra of $\gl(A)$, it suffices to show that
$[D,T]=DT-TD$ is also a tailed derivation. Indeed, since
\begin{eqnarray*}
DT([y,z])&=&D([T(y),z]+[y,T(z)]+t_{z}y-t_{y}z)\\
&=&[DT(y),z]+[T(y),D(z)]+d_{z}T(y)-d_{T(y)}z+[D(y),T(z)]\\
&&+[y,DT(z)]+d_{T(z)}y-d_{y}T(z)+t_{z}D(y)-t_{y}D(z)\,,\\
TD([y,z])&=&T([D(y),z]+[y,D(z)]+d_{z}y-d_{y}z)\\
&=&[TD(y),z]+[D(y),T(z)]+t_{z}D(y)-t_{D(y)}z+[T(y),D(z)]\\
&&+[y,TD(z)]+t_{D(z)}y-t_{y}D(z)+d_{z}T(y)-d_{y}T(z)\,,
\end{eqnarray*}
we have
\begin{equation}
[D,T]([y,z])=[[D,T](y),z]+[y,[D,T](z)]+(d_{T(z)}-t_{D(z)})y-(d_{T(y)}-t_{D(y)})z\,.\label{eq4.1}
\end{equation}
Note that $d_{T(-)}-t_{D(-)}=(d\circ T-t\circ D)(-)$ is a linear form of $A$. Thus $[D,T]$ is a tailed derivation of $A$.
This shows that $\TDer(A)$ is a Lie algebra.
\end{proof}

\begin{exam}\label{exam4.2}
{\rm
Let $\g$ be the two-dimensional nonabelian Lie algebra defined by $[y,z]=z$ and $D=\left(\begin{smallmatrix}
      a&c   \\
      b&e
\end{smallmatrix}\right)$ be a linear map on $\g$ with respect to the basis $\{y,z\}$, where $a,b,c,e\in\C$. A direct calculation shows that if $D\in\Der(\g)$, then $a=c=0$. Thus $\dim\Der(\g)=2$. Moreover, consider the linear form $d$ which
sends $y$ to $a$ and $z$ to $c$. Then  together with the linear form $d$, every $D=\left(\begin{smallmatrix}
      a&c   \\
      b&e
\end{smallmatrix}\right)$ is a tailed derivation of $\g$.
This means that $\TDer(\g)=\gl_{2}(\C)$, strictly containing $\Der(\g)$. 
}\end{exam}

\begin{prop}
Let $L$ be an $\omega$-Lie algebra with a nonzero proper ideal $\g$. Suppose $L=\g\oplus \h$ denotes a decomposition of vector spaces. Then $\ad_{x}$ restricted to $\g$ is a tailed derivation of $\g$ for all $x\in\h$.
\end{prop}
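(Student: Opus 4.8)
The plan is to expand the $\ome$-Jacobi identity on the triple $(x,y,z)$ with $y,z\in\g$, to split the resulting relation along the decomposition $L=\g\oplus\h$, and to read the tail of the derivation off directly from $\ome$. To begin, I would note that $D:=\ad_{x}|_{\g}$ is a well-defined linear endomorphism of $\g$: since $\g$ is an ideal and $x\in\h\subseteq L$, we have $[x,y]\in\g$ for every $y\in\g$, so $\ad_{x}$ maps $\g$ into $\g$, with linearity immediate from bilinearity of the bracket. The case $x=0$ is trivial (take the zero tail), so I may assume $x\neq0$.

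For $y,z\in\g$, rewriting $[[y,z],x]=-\ad_{x}([y,z])$, $[[x,y],z]=[\ad_{x}(y),z]$ and $[[z,x],y]=[y,\ad_{x}(z)]$ by skew-symmetry of the bracket, the $\ome$-Jacobi identity rearranges into
\begin{equation*}
\ad_{x}([y,z])=[\ad_{x}(y),z]+[y,\ad_{x}(z)]-\ome(x,y)z-\ome(y,z)x-\ome(z,x)y.
\end{equation*}
I expect the only delicate point to be the comparison of components in $L=\g\oplus\h$. The left-hand side and the two bracket terms on the right lie in $\g$ (again because $\g$ is an ideal), and so do $\ome(x,y)z$ and $\ome(z,x)y$ since $y,z\in\g$; the sole term in $\h$ is $-\ome(y,z)x$. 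Projecting the identity onto $\h$ therefore forces $\ome(y,z)x=0$, hence $\ome(y,z)=0$ for all $y,z\in\g$. This is the heart of the matter: the vanishing $\ome|_{\g\times\g}=0$, which is precisely what is needed to keep the ``error term'' inside $\g$, is not assumed but is extracted from the $\ome$-Jacobi identity itself through the direct-sum splitting.

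With $\ome|_{\g\times\g}=0$ and $\ome(z,x)=-\ome(x,z)$, the displayed relation collapses to
\begin{equation*}
\ad_{x}([y,z])=[\ad_{x}(y),z]+[y,\ad_{x}(z)]+\ome(x,z)\,y-\ome(x,y)\,z.
\end{equation*}
I would then define the tail $d:\g\ra\C$ by $d_{y}:=\ome(x,y)$, which is linear because $\ome$ is bilinear, so that $d_{z}y-d_{y}z=\ome(x,z)y-\ome(x,y)z$ reproduces the last two terms exactly. Thus $D=\ad_{x}|_{\g}$ together with $d$ satisfies the defining identity of a tailed derivation, completing the proof. Everything after the $\h$-projection is routine; that projection, yielding $\ome|_{\g\times\g}=0$, is the single genuine step.
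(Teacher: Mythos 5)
Your proof is correct, and it reaches the one nontrivial ingredient, $\ome(\g,\g)=0$, by a genuinely different route than the paper. The paper obtains this fact by citing \cite[Corollary 3.2]{Zus2010} (a nonzero proper ideal of an $\ome$-Lie algebra is a Lie algebra, whence $\ome$ vanishes on $\g$), and then rearranges the $\ome$-Jacobi identity exactly as you do to read off the tail $d_{y}=\ome(x,y)$. You instead derive the vanishing internally: in the rearranged identity every term except $-\ome(y,z)x$ lies in $\g$ (because $\g$ is an ideal), so projecting onto $\h$ along the decomposition $L=\g\oplus\h$ forces $\ome(y,z)x=0$, hence $\ome(y,z)=0$ for any fixed nonzero $x\in\h$; such an $x$ exists since $\g$ is proper, and the case $x=0$ is trivial with zero tail, as you note. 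What each approach buys: the paper's citation is shorter and comes with the stronger structural fact that $\g$ satisfies the Jacobi identity, while your projection argument is self-contained, amounts to an elementary proof of exactly the special case of Zusmanovich's result that is needed, and makes transparent where the direct-sum hypothesis in the statement actually enters. Beyond that the two proofs coincide: both use skew-symmetry of $\ome$ to rewrite $-\ome(z,x)y$ as $\ome(x,z)y$, and both take the tail to be $d_{y}:=\ome(x,y)$, which is linear in $y$ by bilinearity of $\ome$.
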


\begin{proof}
By \cite[Corollary 3.2]{Zus10} we see that $\g$ is a Lie algebra; thus $\omega(\g,\g)=0$. Now suppose $y,z\in \g$ and $x\in\h$ are arbitrary elements.
The $\omega$-Jacobi identity implies that $\ad_{x}([y,z])=[\ad_{x}(y),z]+[y,\ad_{x}(z)]+\omega(x,z)y-\omega(x,y)z.$
Thus $\ad_{x}$ restricted to $\g$ is a tailed derivation of $\g$.
\end{proof}

\begin{lemma}\label{lem4.4}
Let $\g$ be a nonzero Lie subalgebra of an $\omega$-Lie algebra $L$ of dimension $\dim(L)-1$. Let $x\in L\setminus\g$ be an arbitrary nonzero vector. Then $\ad_{x}$ restricted to $\g$ is a tailed derivation of $\g$.
\end{lemma}

\begin{proof}
Let $y,z\in\g$. As  $\omega(y,z)=0$, it follows from the  $\omega$-Jacobi identity that $\ad_{x}([y,z])=
[\ad_{x}(y),z]+[y,\ad_{x}(z)]+\omega(x,z)y-\omega(x,y)z$. Clearly, $\omega(x,-)$ is a linear form of $\g$. Hence $\ad_{x}$ is a tailed derivation of $\g$.
\end{proof}

Now we are ready to prove Theorem \ref{mainthm4}.

\begin{proof}[Proof of Theorem \ref{mainthm4}]
Let $L_{x}\in\ext(\g)$ be a one-dimensional $\omega$-extension of $\g$ through $\C x$. Lemma \ref{lem4.4} shows that the adjoint map
$\ad_{x}:L_{x}\longrightarrow L_{x}$ restricted to $\g$ is an element of $\TDer(\g)$. We can define a map $\varphi: \ext(\g)\longrightarrow\TDer(\g)$ by carrying $L_{x}$ to $\ad_{x}|_{\g}$. Conversely, if $D$ is a tailed derivation of $\g$, then there exists a linear form $d$ of $\g$  such that $D([y,z])=[D(y),z]+[y,D(z)]+d_{z}y-d_{y}z$ for all $y,z\in\g$. We define an $\omega$-Lie algebra $L_{x}=\g\oplus\C x$  by
\begin{eqnarray*}
L_{x} :  [x,y]=D(y), [x,x]=0 \textrm{ and }  \omega(x,y)=d_{y}, \omega(x,x)=0
\end{eqnarray*}
for all $y\in\g$; the remaining bracket product $[y,z]$ in $L_{x}$ matches with that in $\g$ and $\omega(y,z)=0$ for all $y,z\in\g$. Note that $d_{y}$ only depends upon $y$ so $\omega(x,y)=d_{y}$ does make sense.
Thus $L_{x}$ is a well-defined $\omega$-Lie algebra. We also define a map $\phi:\TDer(\g)\longrightarrow\ext(\g)$ by $\phi(D)=L_{x}$.
Furthermore, note that $\g$ is an ideal of $L_{x}$ and by the previous construction we see that $\phi\circ\varphi=1_{\ext(\g)}$ and $\varphi\circ\phi=1_{\TDer(\g)}$.
This completes the proof.
\end{proof}

Theorem \ref{mainthm4} indicates that the problem of finding all one-dimensional $\omega$-extensions of a Lie algebra $\g$ could be transformed to calculate tailed derivations of $\g$.
As a direct application, the following example illustrates how to determine all one-dimensional $\omega$-extensions of three-dimensional simple Lie algebra $\sll_{2}(\C)$.

\begin{exam}{\rm
Suppose that $\sll_{2}(\C)$ has a basis $\{e_{1},e_{2},e_{3}\}$ with $[e_{1},e_{2}]=-e_{1}, [e_{1},e_{3}]=2e_{2}$ and $[e_{2},e_{3}]=-e_{3}$. A tedious but direct calculation shows that $\Der(\sll_{2}(\C))=\TDer(\sll_{2}(\C))$ has dimension 3 and  the element $D\in\Der(\sll_{2}(\C))$ is of the  form:
$$D=\begin{pmatrix}
      a&  b&0  \\
      -2c&0&-2b\\
      0&c  &-a
\end{pmatrix}\,,$$
where $a,b,c\in\C$. Hence, any one-dimensional $\omega$-extension of  $\sll_{2}(\C)$ can be determined by at most three parameters.
}\end{exam}

We  present an example of a non-Lie $\omega$-Lie algebra that can be obtained by a Lie algebra $\g$ and a tailed derivation $D$ of $\g$.

\begin{exam}\label{exam4.6}
{\rm
Let $\g$ be the two-dimensional nonabelian Lie algebra defined by $[y,z]=z$ and $D=\left(\begin{smallmatrix}
      1&0    \\
      0&0
\end{smallmatrix}\right)$ be a linear map on $\g$ with respect to the basis $\{y,z\}$. Let $\{y^*, z^*\}$ be the dual basis. Then $y^*:\g\longrightarrow\C$ is a linear form
such that $D$ becomes a tailed derivation of $\g$. By the construction in the proof of Theorem \ref{mainthm4} we eventually derive
a three-dimensional non-Lie $\omega$-Lie algebra which is actually the $\omega$-Lie algebra $L_{1}$ in Example \ref{exam2.1}.
The $\omega$-Lie algebra $L_{2}$ in Example \ref{exam2.1}  can also be  obtained in a similar way.
}\end{exam}

\section*{Acknowledgments}

This research was partially supported by NNSF of China (No. 11301061).
The author thanks the anonymous referee for his/her careful reading and suggestions for the first draft of this article.

%\section*{References}


\begin{thebibliography}{0}


\bibitem{AM14} A. L. Agore and G. Militaru,  Extending structures for Lie algebras, {\it Monatsh. Math.} {\bf 174} (2014) 169-193.

\bibitem{BN07} M. Bobie\'{n}ski and P. Nurowski,  Irreducible $\textrm{SO}(3)$ geometry in dimension five, {\it J. Reine Angew. Math.}  {\bf 605}  (2007) 51-93.

\bibitem{Car05} R. W. Carter, {\it Lie algebras of finite and affine type}  (Cambridge University Press, 2005).

\bibitem{CE48} C. Chevalley and S. Eilenberg, Cohomology theory of Lie groups and Lie algebras, {\it Trans. Amer. Math. Soc.} {\bf 63} (1948) 85-124.

 
\bibitem{CLZ14} Y. Chen, C. Liu and R. Zhang,  Classification of three-dimensional complex $\omega$-Lie algebras, {\it Port. Math.} {\bf 71} (2014) 97-108.

\bibitem{CZ17} Y. Chen and R. Zhang,  Simple $\omega$-Lie algebras and 4-dimensional $\omega$-Lie algebras over $\C$, {\it Bull. Malays. Math. Sci. Soc.} {\bf 40} (2017) 1377-1390.

\bibitem{CZZZ18} Y. Chen, Z. Zhang, R. Zhang and R. Zhuang,  Derivations, automorphisms and representations of complex $\omega$-Lie algebras, {\it Comm. Algebra}  {\bf 46} (2018) 708-726.

\bibitem{Nur07}  P. Nurowski,  Deforming a Lie algebra by means of a 2-form, {\it J. Geom. Phys.} {\bf 57} (2007) 1325-1329.

\bibitem{Nur08}   P. Nurowski, Distinguished dimensions for special Riemannian geometries, {\it J. Geom. Phys.} {\bf 58} (2008) 1148-1170.

 \bibitem{Zus10}   P. Zusmanovich, $\omega$-Lie algebras, {\it J. Geom. Phys.} {\bf 60} (2010) 1028-1044.

\end{thebibliography}
\end{document}